\documentclass{amsart}
\usepackage{amscd}
\usepackage{mathrsfs}
\usepackage{cases}
\usepackage{latexsym}
\usepackage{amsmath}
\usepackage{indentfirst}
\usepackage[arrow,matrix]{xy}
\usepackage{stmaryrd}
\usepackage{amsfonts, color}
\usepackage{amsmath,amssymb,amscd,bbm,amsthm,mathrsfs,dsfont}
\usepackage{graphicx}
\usepackage{fancyhdr}
\usepackage{amsxtra,ifthen}
\usepackage{verbatim}
\usepackage{latexsym,epsfig,amsmath}
\usepackage{epsfig}

\DeclareMathOperator{\End}{End} \DeclareMathOperator{\ch}{char}

\DeclareMathOperator{\rad}{rad}

\numberwithin{equation}{section}

\theoremstyle{plain}
\newtheorem{theorem}{Theorem}[section]
\newtheorem{lemma}[theorem]{Lemma}
\newtheorem{proposition}[theorem]{Proposition}

\newtheorem{conjecture}[theorem]{Conjecture}

\theoremstyle{definition}

\newtheorem{remark}[theorem]{Remark}

\begin{document}

\title[Finite dimensional zJpd algebras are generated by idempotents]
{Finite dimensional zero Jordan product determined algebras are generated by idempotents}

\author{Hongyu Jia  and Zhankui Xiao}

\address{Jia: School of Mathematical Sciences, Huaqiao University,
Quanzhou, Fujian, 362021, P. R. China}
\email{jiahy1995@163.com}

\address{Xiao: School of Mathematical Sciences, Huaqiao University,
Quanzhou, Fujian, 362021, P. R. China}

\email{zhkxiao@hqu.edu.cn}

\begin{abstract}
Bre\v{s}ar showed that a finite dimensional unital associative algebra is zero product determined if and only if it is generated by idempotents. For the analogue of zero Jordan product determined algebras, only one direction was known: over a field of characteristic not 2, every algebra generated by idempotents is zero Jordan product determined. Whether the converse holds has remained an open problem.

In this paper, we answer this question affirmatively in the finite dimensional case.
Some related open problems are stated at the end.
\end{abstract}

\subjclass[2020]{primary 16P10; secondary 17C10; 16K20; 15A86}
%16P10  	Finite rings and finite-dimensional associative algebras
%16N40  	Nil and nilpotent radicals, sets, ideals, associative rings
%16K20  	Finite-dimensional division rings
\keywords{Zero Jordan product determined algebra, Zero product determined algebra, Local
algebra, Division algebra, Idempotent}

\thanks{The first author is supported by the National Natural Science Foundation of China (No.12401022) and the Natural Science Foundation of Xiamen (No.3502Z202471031).}
\maketitle

\section{Introduction}\label{xxsec1}

Let $F$ be an arbitrary field. An $F$-algebra is a pair $(A,\cdot)$, where $A$ is an $F$-linear space 
and $\cdot:A\times A\rightarrow A$ is an $F$-bilinear map. 
We sometimes write an $F$-algebra $A$ for short if there is no confusion on the binary operator ‘‘$\cdot$’’. 
An $F$-algebra  $A$ is called {\em zero product determined}
({\em zpd} for short) if for every bilinear map $f: A\times A\rightarrow F$ with the property that
\begin{equation}\label{eq1}
f(a,b)=0\quad {\rm whenever}\quad a\cdot b=0,
\end{equation}
then there exists a linear map $\Phi:A\rightarrow F$ such that
\begin{equation}\label{eq2}
f(x,y)=\Phi(x\cdot y)\quad {\rm for\ all}\quad x,y\in A.
\end{equation}

Let $A$ be an associative algebra over $F$. If we introduce the Jordan
product on $A$ as usual by $x\circ y=xy+yx$, then $(A,\circ)$
becomes a Jordan algebra. We say that $A$ is \emph{zero Jordan product determined}
(\emph{zJpd} for short) if the Jordan algebra $(A,\circ)$ is zero product determined.

The concept of a zpd algebra originated from the field of linear preserver problems
(see \cite{M} for instance).
In \cite{BS06}, Bre\v{s}ar and \v{S}emrl investigated commutativity preserving linear maps on central simple algebras, and in \cite{ABEV07},  Alaminos, Bre\v{s}ar, Extremera and Villena studied zero product preserving maps and local derivations on $C^{*}$-algebras. 
Both papers share a core strategy: establishing results about certain linear maps by investigating bilinear maps that vanish on pairs of elements with zero product.
Motivated by the aforementioned works, the concept of a zpd algebra was introduced in \cite{BGS}. 
Since then, zpd algebras have been extensively studied, see \cite{ABESV,ABEV,BGLLZ,BH,CKLW,Grasic10,Grasic11,Grasic15,WangYC,YLL}
and the references therein. 
We refer the reader to Bre\v{s}ar's monograph \cite{BOOK} for a comprehensive understanding of the subject.

In \cite{Bresar}, Bre\v{s}ar studied the universal properties of the structure
of associative zpd algebras, and established that a finite dimensional unital associative algebra over a field is zpd if and only if it is generated by idempotents. 
On the other hand, deciding whether an algebra is generated by idempotents is itself a highly nontrivial matter, which has received a lot of attention in the field of representation theory.
By introducing zero action determined modules, Hu and the second author \cite{HX} provided a module theoretic criterion for testing an algebra generated by idempotents. 

Motivated by the progress of associative zpd algebras, 
researchers began to investigate the analogous description of zJpd algebras. 
A step in this direction was made in \cite{MR3325219} 
(and partially in \cite{Ghahramani}), where the following result was established.

\begin{theorem}\label{thm13}
Let $A$ be an associative unital algebra over a field of characteristic not 2. 
If $A$ is generated by idempotents, then $A$ is zJpd.
\end{theorem}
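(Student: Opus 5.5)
Let $f\colon A\times A\to F$ be bilinear with $f(x,y)=0$ whenever $x\circ y=0$. The plan is to show that $f$ factors through the Jordan product, i.e. $f(x,y)=\Phi(x\circ y)$ for a linear functional $\Phi$.

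\textbf{Reduction.} It is enough to find $\Phi$ with $f(x,y)=\Phi(x\circ y)$ on all $x,y$, and the natural candidate is $\Phi(x)=\tfrac12 f(x,1)$, which uses $\operatorname{char}F\neq 2$. Indeed, if $f(x,y)=\tau(xy+yx)$ for some linear $\tau$, then $\Phi(x)=\tfrac12 f(x,1)=\tfrac12\tau(2x)=\tau(x)$. So the goal becomes the identity
\[
2f(x,y)=f(x\circ y,1)\qquad\text{for all }x,y\in A .
\]
This identity is linear in $x$ and $y$ separately, but not multiplicative. I would therefore first extract symmetric and multiplicative information from idempotents and then propagate it along products of idempotents, which span $A$.

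\textbf{Step 1: idempotent relations.} For an idempotent $e$, set $e^\perp=1-e$. Then $e\circ e^\perp=0$, and more generally $exe\circ e^\perp x' e^\perp=0$ for all $x,x'$. Hence $f$ vanishes on $eAe\times e^\perp Ae^\perp$ and on the symmetric pairs. Next use the standard trick for elements of square zero: if $a^2=b^2=0$ and $ab=ba=0$, then $a\circ b=0$. Also $(ex e^\perp)\circ e^\perp = ex e^\perp$, but this is not zero, so it gives no direct relation. Instead, follow the method in \cite{MR3325219}. Take $u=exe^\perp$; then $e+u$ is an idempotent with complement $e^\perp-u$, and $(e+u)\circ(e^\perp-u)=0$. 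Apply the vanishing of $f$ on this pair and on pairs $(e+tu,\,e^\perp-tu)$ for all $t\in F$. Expanding and comparing coefficients of $t$ gives linear relations (if $F$ is small, use the polarization obtained from replacing $x$ by $x+x'$). These yield
\[
f(e,y)=f(y,e)=\tfrac12 f(e\circ y,1)
\]
for all $y$ and every idempotent $e$. The same argument, applied to $f(x,\cdot)$ restricted to the Peirce pieces, should give
\[
f(ex,y)+f(xe,y)=f(x,e\circ y)
\]
type ``derivation-like'' identities, i.e.\ $f(x\circ e,y)=f(x,e\circ y)$.

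\textbf{Step 2: propagation.} With the associativity-type identity $f(x\circ e,y)=f(x,e\circ y)$ in hand for every idempotent $e$, I would argue by induction on the length of a monomial $x=e_1e_2\cdots e_n$ in idempotents. In a Jordan setting, monomials are awkward, so first note that the linear span of the Jordan products $e_1\circ(e_2\circ(\cdots))$ together with $U$-operators $exe=\tfrac12(e\circ(e\circ x)-e\circ x)$ is the whole subalgebra generated by idempotents. Here one uses $\operatorname{char}\ne2$ and the identity $2exe=e\circ(e\circ x)-e\circ x$. Products $xy$ can be recovered from $x\circ y$ and Peirce components. Thus $A$ is spanned by elements reached from $1$ by repeated operators $L_e=e\circ\,\cdot$. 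Then
\[
f(L_{e_1}\cdots L_{e_n}1,\,y)=f(1,\,L_{e_n}\cdots L_{e_1}y),
\]
and, using Step 1 with the roles reversed,
\[
f(1,z)=\tfrac12 f(z,1)\cdot 2/2 .
\]
Concretely, $f(1,z)=f(z,1)$ follows from Step 1 with $e=1$. Combining these gives $2f(x,y)=f(x\circ y,1)$, with the symmetry of $L$-operators under $f$ doing the bookkeeping. One also checks that the required ``adjointness'' transfers, since $x\circ y$ as a polynomial in the $L$'s is consistent.

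\textbf{Main obstacle.} Step 1 is the crux: deriving $f(x\circ e,y)=f(x,e\circ y)$ from vanishing on zero Jordan products. Jordan zero products are much sparser than associative ones; for instance, $ex e^\perp\circ e^\perp y e$ need not vanish. I would first try the perturbed idempotent families $e+u$, $e+v$ with $u\in eAe^\perp$ and $v\in e^\perp Ae$, together with $(e+u)\circ(e^\perp-u)=0$ and its two-parameter variants, to generate enough relations Peirce piece by Peirce piece. If needed, I would also use the associative zpd-style result of \cite{Bresar} on the Peirce corners.
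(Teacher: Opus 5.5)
The paper does not prove this theorem; it quotes it from the literature. Your argument therefore has to stand on its own, and it does not: the identity $f(x\circ e,y)=f(x,e\circ y)$ on which Step 2 rests is false. For any linear $\Phi$, the form $f(x,y)=\Phi(x\circ y)$ vanishes on Jordan-zero pairs. Expanding gives $f(x\circ e,y)-f(x,e\circ y)=\Phi\bigl((x\circ e)\circ y-x\circ(e\circ y)\bigr)=\Phi([e,[x,y]])$. For example, in $M_2(F)$ take $\Phi(w)=w_{12}$, $e=E_{11}$, $x=E_{12}$, $y=E_{22}$. Then $f(x\circ e,y)=f(E_{12},E_{22})=1$, while $f(x,e\circ y)=f(E_{12},0)=0$. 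The theorem says every admissible $f$ has this form with $\Phi$ arbitrary, so no Peirce manipulation can yield your identity. Because the Jordan product is not associative, $L_e$ is simply not self-adjoint for $f$.

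The same non-associativity independently breaks your final bookkeeping: $L_{e_n}\cdots L_{e_1}y\neq x\circ y$ for $x=L_{e_1}\cdots L_{e_n}1$. Identity (i) is true but is not derived either. Since $tu$ ranges over the same space as $u$, the family $(e+tu,e^\perp-tu)$ gives nothing beyond $f(u,e^\perp)=f(e,u)$. You also need that $2e-1$ Jordan-annihilates $eAe^\perp+e^\perp Ae$. Moreover, for $y\in eAe$, identity (i) is equivalent to the global statement $f(1,y)=f(y,1)$, which you never address. Your reduction to $2f(x,y)=f(x\circ y,1)$ is correct, and so is your remark that Jordan monomials in idempotents span $A$.

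What works is to propagate a vanishing statement rather than an adjointness identity. Replace $f$ by $g(x,y)=f(x,y)-\tfrac12 f(x\circ y,1)$. This still kills Jordan-zero pairs and satisfies $g(\cdot,1)=0$, and you must show $g=0$.

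For an idempotent $e$, use the zero pairs $(eAe,e^\perp Ae^\perp)$, $(eAe^\perp,eAe^\perp)$, $(e^\perp Ae,e^\perp Ae)$ and $(2e-1,\,eAe^\perp+e^\perp Ae)$. Apply them to $e$, to $e^\perp$, and to the perturbed idempotents $p=e+u$ and $e^\perp+v$, where $u\in eAe^\perp$ and $v\in e^\perp Ae$. Useful formulas: $pyp=y+yu$ for $y\in eAe$, $p^\perp zp^\perp=z-uz$ for $z\in e^\perp Ae^\perp$, and $p^\perp wp=w+wu-uw-uwu$ for $w\in e^\perp Ae$. Together these kill $g$ on every pair of Peirce components except $(eAe,eAe)$ and $(e^\perp Ae^\perp,e^\perp Ae^\perp)$. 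That is, $g(x,y)=g(exe,eye)+g(e^\perp xe^\perp,e^\perp ye^\perp)$.

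Consequently the right radical $\{y: g(A,y)=0\}$ contains $1$ and $eAe^\perp$, and it is stable under $y\mapsto eye$. Since $ey=eye+eye^\perp$, it is stable under left multiplication by every idempotent. As products of idempotents span $A$, the right radical is all of $A$, so $g=0$.
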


However, the converse implication of Theorem \ref{thm13} remains open up to now, as far as we know.
Discussing this problem, in \cite[Page 54]{BOOK} Bre\v{s}ar remarked:
\begin{quote}
``As a matter of fact, we are facing the same problem as with zpd algebras: 
we do not know of any unital zJpd algebra 
(over a field of characteristic not $2$) that is not generated by idempotents. 
Generally speaking, our understanding of zJpd algebras is, at present, rather limited.''
\end{quote}
This state of affairs naturally raises a central question:
does every unital associative zJpd algebra is generated by idempotents?
In particular, in this note we focus on this question in the finite dimensional case,
and we can answer this question affirmatively as follows.

\begin{theorem} \label{thm}
Let $F$ be a field of characteristic not 2 and $A$ be a finite dimensional associative unital $F$-algebra.
Then $A$ is zJpd if and only if it is generated by idempotents.
\end{theorem}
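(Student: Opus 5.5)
The plan is to prove only the direction ``zJpd $\Rightarrow$ generated by idempotents'', since the converse is Theorem \ref{thm13}. The argument has two parts: a reduction to quotients, and a list of ``minimal'' algebras that are not generated by idempotents, each shown not to be zJpd by an explicit bilinear form.

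\textbf{Step 1 (quotients inherit zJpd).} Let $A$ be unital zJpd and $I$ an ideal, and let $f:A/I\times A/I\to F$ vanish on zero Jordan products. Pulling back gives $\tilde f(x,y)=f(x+I,y+I)$, which vanishes whenever $x\circ y=0$ in $A$. Hence $\tilde f(x,y)=\Phi(x\circ y)$ for some linear $\Phi$. For $i\in I$ we have $2\Phi(i)=\Phi(i\circ 1)=\tilde f(i,1)=0$, and since $\operatorname{char}F\neq 2$ this gives $\Phi(I)=0$. So $\Phi$ factors through $A/I$, and $A/I$ is zJpd.

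\textbf{Step 2 (reduction to minimal quotients).} Assume $A$ is not generated by idempotents. Among the ideals $I$ for which $A/I$ is not generated by idempotents, choose one maximal with respect to inclusion; by finite dimensionality it exists. Then every proper quotient of $A/I$ is generated by idempotents. I would use Bre\v{s}ar's structure theory from \cite{Bresar}, together with the fact that idempotents lift modulo the radical, to show that such a minimal algebra is, up to the relevant corner, one of two types. Either it is a division algebra $D\neq F$ (coming from the semisimple quotient $A/\rad A$, where a block $M_n(D)$ with $n\ge 2$ is generated by idempotents), or it is a local algebra $L$ with $L/\rad L=F$ and $(\rad L)^2=0$, $\rad L\neq 0$. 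This classification is the step I expect to be the main obstacle. The difficulty is handling blocks $M_n(D)$ with $n\ge2$ glued together through the radical, where the obstruction lives in $e(\rad A/\rad^2A)e$ for a primitive idempotent $e$. Here I would try to show that the quotient by the ideal generated by all the other primitive idempotents, plus $\rad^2$, is itself a proper quotient of the required local form.

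\textbf{Step 3 (the minimal algebras are not zJpd).} For each type I would exhibit a bilinear form that vanishes on zero Jordan products but is not of the form $\Phi(x\circ y)$. Three cases arise.
\begin{itemize}
\item \emph{$D$ a commutative field extension $K\neq F$.} Here $x\circ y=2xy=0$ forces $x=0$ or $y=0$, so every bilinear form qualifies. Comparing the dimensions $[K:F]^2>[K:F]$ finishes this case.
\item \emph{$D$ noncommutative.} Let $t$ be the reduced trace and put $f(x,y)=t(x)t(y)$. If $x\circ y=0$ with $x,y\neq0$, then $y^{-1}xy=-x$, so $t(x)=0$, and $f$ vanishes on zero Jordan products. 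Setting $x=1$ forces $\Phi=\tfrac{t(1)}{2}t$. Choosing $a$ with $t(a)=0$ but $t(a^2)\neq 0$ then gives a contradiction.
\item \emph{The local case $L=F1\oplus N$ with $N^2=0$.} Here $(\lambda+n)\circ(\mu+m)=2\lambda\mu+\lambda m+\mu n$ (up to factor $2$ on the scalar part). A zero Jordan product therefore forces $\lambda\mu=0$ and then, for instance, $\lambda=0,\ \mu m=0$. I would take $f(\lambda+n,\mu+m)=\psi(n)\psi'(m)$ for suitable functionals $\psi,\psi'$ on $N$, adjusted by a term $\lambda\mu$ if necessary. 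Such an $f$ kills all zero pairs but restricts nontrivially to $N\times N$, whereas any $\Phi(x\circ y)$ vanishes on $N\times N$.
\end{itemize}

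Combining Steps 1--3: if $A$ were zJpd, its minimal quotient $A/I$ would be zJpd by Step 1. But $A/I$ is one of the algebras of Step 2, and Step 3 shows none of them is zJpd. This contradiction proves that $A$ is generated by idempotents.
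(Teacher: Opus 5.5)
\textbf{Gap 1: the local case of your Step 3 fails.} You are using the wrong obstruction. Since $N^2=0$, we have $n\circ m=nm+mn=0$ for all $n,m\in N$, so every pair in $N\times N$ is a zero Jordan pair. Your case analysis skips the subcase $\lambda=\mu=0$, where nothing is forced. Hence any admissible $f$ must vanish on $N\times N$, exactly as $\Phi(x\circ y)$ does. Your $f(\lambda+n,\mu+m)=\psi(n)\psi'(m)$ therefore does not satisfy the hypothesis, and adding a multiple of $\lambda\mu$ does not help.

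What separates admissible forms from forms $\Phi(x\circ y)$ here is symmetry. The zero pairs in $F1\oplus N$ are exactly those with $\lambda=\mu=0$, or $x=0$, or $y=0$. So $f(\lambda+n,\mu+m)=\lambda\psi(m)$, with $\psi\neq 0$ a functional on $N$, is admissible. Yet $f(1,a)=\psi(a)\neq 0=f(a,1)$ for suitable $a\in N$, which is impossible for a symmetric $\Phi(x\circ y)$. This is the paper's Lemma \ref{lo}. There the $\rad(A)/\rad(A)^2$-valued form $f(\lambda+a,\mu+b)=\lambda b+\rad(A)^2$ handles arbitrary local algebras without first reducing to $\rad^2=0$.

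\textbf{Gap 2: Step 2 is not proved.} You flag it as the main obstacle, and the route you sketch does not work as stated. If $e$ lies in a block $M_n(D)$ with $n\ge 2$, the ideal generated by the other primitive idempotents already contains $e$. In any case you would still have to show that an algebra with no such local quotient is generated by idempotents, which is the whole difficulty. Passing to ``the relevant corner'' would also need $eAe$ to inherit zJpd (\cite[Theorem 1.16]{BOOK}); your Step 1 only covers quotients.

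The missing idea is the ideal $I_B$ generated by all $[e,x]$ with $e$ idempotent. It lies in the subalgebra $E_B$ generated by idempotents (\cite[Lemma 2.26]{BOOK}), and idempotents lift modulo arbitrary ideals (Lemma \ref{l1}). With these, Step 2 goes through for your minimal $B$:
\begin{itemize}
\item If $I_B\neq 0$, then $B/I_B$ is generated by idempotents, and lifting gives $B=E_B+I_B=E_B$, a contradiction. So $I_B=0$ and all idempotents of $B$ are central.
\item Then $B$ is a product of local blocks $e_iB$, each a quotient of $B$, and minimality forces $B$ to be local.
\item Either $\rad(B)=0$ and $B$ is a division algebra $\neq F$, or $B/\rad(B)=F$ and $\rad(B)^2=0$. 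For the latter: otherwise $B/\rad(B)^2$ would be a local algebra generated by idempotents, hence $F$, forcing $\rad(B)=\rad(B)^2$ and so $\rad(B)=0$.
\end{itemize}
The paper runs the same commutator-ideal argument directly on $A/I$, without any minimality.

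\textbf{Smaller points.} In the noncommutative division case, $t(1)$ vanishes when $\ch F$ divides the degree of $D$. Your final step must then use $\Phi=0\neq f$ instead, where $f\neq 0$ by surjectivity of $t$. The existence of $a\in\ker t$ with $t(a^2)\neq 0$ also needs justification, or you can use the paper's pair $u,u^{-1}$. Since $f$ is $Z(D)$-valued, invoke Proposition \ref{zJpd-def}. Your treatment of a field $K\neq F$ by the dimension count $[K:F]^2>[K:F]$ is correct, and more elementary than the paper's appeal to Bre\v{s}ar's theorem.
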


Consequently, the above theorem becomes the Jordan analogue of Bre\v{s}ar's characterization of finite dimensional associative zpd algebras (see \cite[Theorem 3.7]{Bresar}). 
In addition, it clarifies the role played by idempotents in zJpd algebras,
which is not easily discovered in the definition.
Let us end the introduction with a terminological convention: by an {\em algebra}
we will mean an associative unital algebra over some field.

\section{Main Result} 
This section is devoted to proving the main result Theorem \ref{thm}.
Throughout, we assume that $F$ is a field of $\ch (F)\neq 2$.
For an $F$-algebra $A$, we denote $\rad (A)$ and $Z(A)$ the Jacobson radical and center
of $A$ respectively. 

We start our proof by recalling the following equivalent description of zJpd algebras,
which is crucial for the application of zJpd algebras in linear preserver problems.

\begin{proposition}{\rm (\cite[Proposition 1.3]{BOOK})}\label{zJpd-def}
Let $A$ be an $F$-algebra. Then $A$ is zJpd if and only if
for every $F$-vector space $V$ and every bilinear map $f: A\times A\rightarrow V$
with the property $f(a,b)=0$ whenever $ a\circ b=0$,
there exists a linear map $\Phi: A\rightarrow V$ such that $f(x,y)=\Phi(x\circ y)$
for all $x,y\in A$.
\end{proposition}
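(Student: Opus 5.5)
The plan is to reduce the vector-valued condition to the scalar one by testing against linear functionals, and then to show that $f$ factors through the Jordan product map on $A\otimes_F A$. One direction is immediate: if the condition of Proposition \ref{zJpd-def} holds for every vector space $V$, it holds in particular for $V=F$, and that is exactly the definition of $A$ being zJpd.

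For the converse, assume $A$ is zJpd. Let $V$ be any $F$-vector space and $f:A\times A\to V$ a bilinear map with $f(a,b)=0$ whenever $a\circ b=0$. By the universal property of the tensor product, $f$ induces a linear map $\tilde f:A\otimes_F A\to V$ with $\tilde f(x\otimes y)=f(x,y)$. The Jordan product likewise induces a linear map $m:A\otimes_F A\to A$ with $m(x\otimes y)=x\circ y$. A linear $\Phi$ with $f(x,y)=\Phi(x\circ y)$ for all $x,y$ exists if and only if $\tilde f=\Phi\circ m$. Such a $\Phi$ exists on $m(A\otimes A)$ precisely when $\ker m\subseteq\ker\tilde f$.

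To prove $\ker m\subseteq \ker\tilde f$, let $u=\sum_i x_i\otimes y_i$ with $\sum_i x_i\circ y_i=0$, and let $\lambda\in V^*$ be arbitrary. The scalar bilinear map $\lambda\circ f$ still vanishes on pairs with zero Jordan product. Since $A$ is zJpd, there is a linear $\Phi_\lambda:A\to F$ with $\lambda(f(x,y))=\Phi_\lambda(x\circ y)$ for all $x,y$. Hence
\[
\lambda(\tilde f(u))=\sum_i\Phi_\lambda(x_i\circ y_i)=\Phi_\lambda\Bigl(\sum_i x_i\circ y_i\Bigr)=0 .
\]
This holds for every $\lambda\in V^*$. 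Linear functionals separate the points of $V$: extend a nonzero vector to a basis and take the coordinate functional. It follows that $\tilde f(u)=0$.

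Finally, $m$ is surjective. Since $A$ is unital and $\ch(F)\neq 2$, we have $x=\tfrac12(x\circ 1)=m(\tfrac12 x\otimes 1)$. Therefore $\tilde f$ descends to a well-defined linear map $\Phi:A\cong (A\otimes A)/\ker m\to V$. Explicitly, $\Phi(x)=\tfrac12 f(x,1)$, and it satisfies $\Phi(x\circ y)=f(x,y)$ for all $x,y\in A$.

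There is no serious obstacle. The only points needing care are that the functionals $\Phi_\lambda$ are not required to be compatible with one another (we use only that each one kills $\sum_i x_i\circ y_i$), and that separating points of an arbitrary, possibly infinite-dimensional, $V$ uses the existence of a basis.
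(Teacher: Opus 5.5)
Your proof is correct. The paper does not prove this proposition; it quotes it from Bre\v{s}ar's monograph (Proposition 1.3 there). So there is no in-paper argument to compare against, and yours is a valid self-contained proof.

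Its core is the standard duality argument. You apply the scalar zJpd property separately to each map $(x,y)\mapsto\lambda(f(x,y))$ with $\lambda\in V^*$, conclude that $\tilde f$ kills $\ker m$, and then use that $V^*$ separates points. If you apply this to the quotient map $A\otimes A\to (A\otimes A)/N$, where $N$ is spanned by the tensors $x\otimes y$ with $x\circ y=0$, it gives the familiar reformulation $\ker m=N$.

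Your use of $1\in A$ and $\ch(F)\neq 2$ to make $m$ surjective is legitimate under the paper's standing conventions. It also yields the explicit identity $f(x,y)=\frac12 f(x\circ y,1)$, which is exactly what the paper uses in the proof of Lemma \ref{di}. These hypotheses are not needed for the equivalence itself, though. You can define $\Phi$ on $m(A\otimes A)$ and extend it linearly to $A$ via a complement, and then the argument works for nonunital algebras and in any characteristic.
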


Taking our target in mind, given a finite dimensional zJpd $F$-algebra $A$,
we want to show that $A$ is generated by idempotents. Notice that
a module theoretic equivalent description of finite dimensional algebras generated
by idempotents was established in \cite{HX}, where one of the main ingredients
depends on the studying of local algebras (see \cite[Lemmas 2.10 and 2.13]{HX}).
This naturally motivates our study of the zJpd property of local algebras.

Recall that a finite dimensional $F$-algebra $A$ is {\em local} if the set of all
non-invertible elements forms an ideal of $A$, which coincides with the Jacobson
radical $\rad (A)$. Equivalently, $A/\rad (A)$ is a division algebra.
Moreover, by lifting idempotents modulo nil ideal, we know that $A$ is local
if and only if it has no nontrivial idempotents.
Accordingly, we need in turn to study the zJpd property of division algebras and then local algebras.

We now recall some properties of finite dimensional division algebras,
see \cite[Section 22]{Draxl}.

Let $A$ be a finite dimensional central simple $F$-algebra. Recall that
an extension field $K$ of $F$ is said to be a {\em splitting field} for $A$ if
$K\otimes_F A\cong M_n(K)$ for some $n\in \mathbb{N}$,
where $M_n(K)$ denotes the $n\times n$ full matrix algebra over $K$.
If we further assume
that $A$ is a division $F$-algebra, then every maximal subfield $K$ of $A$
is a splitting field for $A$. Here we would like to remark that the subfield
$K$ contains the unity of $A$, i.e., $K\supseteq F$.

Let $D$ be a finite dimensional division $F$-algebra.
Then the center $Z(D)$ is a field and hence $D$ is a finite dimensional central division
$Z(D)$-algebra. 
Hence there exists a splitting field $K$ for $D$ such that
$K\otimes_{Z(D)}D\cong M_n(K)$,
where $n=\sqrt{\dim_{Z(D)} D}$ and $K$ is a finite extension of $Z(D)$.
Fix a $Z(D)$-algebra isomorphism
\[
\varphi: K\otimes_{Z(D)}D\longrightarrow M_n(K).
\]
For every $a\in D$, define
\[
\operatorname{RTr}(a)
:=\operatorname{Tr}\bigl(\varphi(1\otimes a)\bigr),
\]
where $\operatorname{Tr}$ denotes the ordinary matrix trace.
By \cite[Section~22, Lemma~2]{Draxl},
the image $\operatorname{RTr}(a)$ in fact belongs to $Z(D)$ and is independent of the
choice of both the splitting field $K$ and the isomorphism
$\varphi$. Consequently,
\[
\operatorname{RTr}:D\longrightarrow Z(D)
\]
is a well-defined $Z(D)$-linear map, called the \emph{reduced trace} of $D$.
Furthermore, it is clear that
$\operatorname{RTr}(xy)=\operatorname{RTr}(yx)$
for all $x,y\in D$. A key property of the reduced trace is as follows.

\begin{proposition}\label{tr1}
{\rm (\cite[Section 22, Lemma 4]{Draxl})}
The reduced trace $\operatorname{RTr}$ is surjective.
\end{proposition}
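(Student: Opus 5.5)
Since Proposition \ref{tr1} is quoted from \cite{Draxl}, I will only reconstruct the standard argument: the reduced trace is $Z(D)$-linear, so it suffices to find one element $a\in D$ with $\operatorname{RTr}(a)\neq 0$. Then $\lambda a/\operatorname{RTr}(a)$ has reduced trace $\lambda$ for every $\lambda\in Z(D)$, since the image is a $Z(D)$-subspace of the one-dimensional $Z(D)$-space $Z(D)$. Write $Z=Z(D)$ and fix a splitting field $K$ with the isomorphism $\varphi:K\otimes_{Z}D\to M_n(K)$ as above.

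The key step is to show that $\operatorname{RTr}$ is not identically zero on $D$. Suppose, for contradiction, that $\operatorname{RTr}(a)=0$ for all $a\in D$. Extend $\operatorname{RTr}$ $K$-linearly to $K\otimes_Z D$, so that $\operatorname{Tr}\circ\varphi$ is a $K$-linear functional. Since $\varphi$ is $K$-linear (it is an isomorphism of $K$-algebras after base change), this functional agrees with $\mathrm{id}_K\otimes\operatorname{RTr}$. The elements $1\otimes a$ span $K\otimes_Z D$ over $K$, so $\operatorname{Tr}(\varphi(x))=0$ for every $x\in K\otimes_Z D$. But $\varphi$ is surjective onto $M_n(K)$, and the matrix unit $E_{11}$ has trace $1\neq 0$. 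This is a contradiction.

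The only point needing care, and the main obstacle, is the $K$-linearity of $\varphi$. The isomorphism $K\otimes_Z D\cong M_n(K)$ in the definition of a splitting field is an isomorphism of $K$-algebras, and I will use it as such. If one insists on a mere $Z$-algebra isomorphism, I would compose with the appropriate automorphism of $K$ via Skolem--Noether, or simply note that the center $K\otimes 1$ must map onto the scalar matrices, so $\varphi$ is $\sigma$-semilinear for some $\sigma\in\operatorname{Aut}(K/Z)$. In that case the trace of the image still vanishes identically under the assumption, since $\operatorname{Tr}(\varphi(c\otimes a))=\sigma(c)\operatorname{Tr}(\varphi(1\otimes a))$, and the same contradiction follows. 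Combined with the linearity reduction in the first paragraph, this gives surjectivity.
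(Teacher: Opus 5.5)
Your argument is correct and is the standard base-change proof of the cited result; the paper gives no proof of its own, only the reference to Draxl. You use $Z(D)$-linearity to reduce surjectivity to $\operatorname{RTr}\neq 0$, and vanishing of $\operatorname{Tr}\circ\varphi$ on $1\otimes D$, which spans $K\otimes_{Z(D)}D$ over $K$, would force $\operatorname{Tr}$ to vanish on all of $M_n(K)$. Your semilinearity observation correctly handles the paper's phrasing of $\varphi$ as a mere $Z(D)$-algebra isomorphism, but the appeal to Skolem--Noether there is misplaced and can simply be dropped: inner automorphisms are $K$-linear, and what you would actually compose with is $\sigma^{-1}$ applied entrywise.
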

 
The following equivalent statements of Proposition \ref{tr1} to some extent is well-known
for experts, which means that the finite dimensional division $F$-algebra $D$
is a symmetric $Z(D)$-algebra. In other words, the division algebra
$D$ in some sense is of a geometric
structure over $Z(D)$. We include a proof here for the reader's convenience.
%and because of its independent interest, we  

\begin{proposition}\label{tr2}
Let $D$ be a finite dimensional division $F$-algebra.
Then
\[
B(x,y):=\operatorname{RTr}(xy),\quad \text{for all } x,y\in D,
\]
is a nondegenerate associative symmetric $Z(D)$-bilinear form.
\end{proposition}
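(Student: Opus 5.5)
We verify the four properties of $B$ one by one. Bilinearity over $Z(D)$ comes from the $Z(D)$-linearity of $\operatorname{RTr}$ and the $Z(D)$-bilinearity of multiplication in $D$: for $\lambda\in Z(D)$ we have $(\lambda x)y=x(\lambda y)=\lambda(xy)$, so $B(\lambda x,y)=B(x,\lambda y)=\lambda B(x,y)$. Symmetry is the identity $\operatorname{RTr}(xy)=\operatorname{RTr}(yx)$ noted before Proposition \ref{tr1}; if one wants it justified, it follows from $\varphi(1\otimes xy)=\varphi(1\otimes x)\varphi(1\otimes y)$ and $\operatorname{Tr}(PQ)=\operatorname{Tr}(QP)$ in $M_n(K)$. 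Associativity, in the sense $B(xy,z)=B(x,yz)$, holds because both sides equal $\operatorname{RTr}(xyz)$ by associativity of $D$.

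Nondegeneracy is the only part that uses more than formal manipulation, and Proposition \ref{tr1} handles it. By symmetry it suffices to show the left radical is zero. Suppose $x\in D$ with $x\neq 0$ satisfies $B(x,y)=0$ for all $y\in D$. Since $D$ is a division algebra, $x$ is invertible. As $y$ runs through $D$, so does $xy$ (take $y=x^{-1}z$). Hence $\operatorname{RTr}(z)=0$ for every $z\in D$, so $\operatorname{RTr}$ is the zero map. Because $Z(D)\neq 0$, this contradicts the surjectivity of $\operatorname{RTr}$ in Proposition \ref{tr1}. Therefore $x=0$, and $B$ is nondegenerate. Since $D$ is finite dimensional over $Z(D)$, trivial left and right radicals mean the induced map $D\to \operatorname{Hom}_{Z(D)}(D,Z(D))$ is an isomorphism.

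There is no real obstacle once Proposition \ref{tr1} is available. The only step needing care is the well-definedness and $Z(D)$-valuedness of $\operatorname{RTr}$, which the paper takes from \cite{Draxl}, together with the observation that invertibility of nonzero elements turns nonvanishing of the functional into nondegeneracy of the form.
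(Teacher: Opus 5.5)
Your proposal is correct and follows essentially the same route as the paper. The paper treats bilinearity, symmetry and associativity as immediate, and proves nondegeneracy exactly as you do: a nonzero $x$ is invertible, so $0=B(x,x^{-1}y)=\operatorname{RTr}(y)$ for all $y\in D$, which contradicts the surjectivity in Proposition \ref{tr1}.
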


\begin{proof}
We only need to prove the non-degeneracy of $B$.
Let $x\in D$ such that $B(x,y)=\operatorname{RTr}(xy)=0$ for all $y\in D$.
If $x\neq 0$, then $x$ is invertible in the division algebra $D$. Hence
\[
0=B(x,x^{-1}y)=\operatorname{RTr}(y)
\]
for all $y\in D$, which is in contradiction with the surjectivity of $\operatorname{RTr}$.
\end{proof}

We also can get the surjectivity of reduced trace $\operatorname{RTr}$ by the
non-degeneracy of $B$. After the above preparation,
we now study the zJpd property of division algebras,
inspirited by \cite[Example 3.19]{BOOK} of real quaternions.

\begin{lemma}\label{di}
If a finite dimensional division $F$-algebra $D$ is zJpd, then $D=F$.
\end{lemma}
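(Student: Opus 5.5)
The plan is to argue by contradiction. Assuming $D\neq F$, I will exhibit an $F$-bilinear map $f:D\times D\to F$ that vanishes on all pairs with zero Jordan product but is not of the form $\Phi(x\circ y)$. The whole construction rests on one observation about the reduced trace.

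\emph{Key observation.} Suppose $x,y\in D$ are nonzero and $x\circ y=0$, i.e.\ $xy=-yx$. Since $y$ is invertible, $yxy^{-1}=-x$. Because $\operatorname{RTr}(ab)=\operatorname{RTr}(ba)$, we get
\[
\operatorname{RTr}(x)=\operatorname{RTr}(yxy^{-1})=-\operatorname{RTr}(x).
\]
As $\ch(F)\neq 2$, this gives $\operatorname{RTr}(x)=0$, and by symmetry $\operatorname{RTr}(y)=0$. Consequently, for any $F$-linear functionals $\lambda,\mu: Z(D)\to F$, the map
\[
f(x,y)=\lambda(\operatorname{RTr}(x))\,\mu(\operatorname{RTr}(y))
\]
is $F$-bilinear and vanishes whenever $x\circ y=0$. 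The case where $x$ or $y$ is $0$ is trivial. If $D$ is zJpd, then $f(x,y)=\Phi(x\circ y)$ for some $\Phi$, and in particular $f$ must be symmetric.

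\emph{Step 1: $Z(D)=F$.} Suppose $\dim_F Z(D)\ge 2$. Choose $F$-linearly independent functionals $\lambda,\mu$ on $Z(D)$. By Proposition~\ref{tr1}, $\operatorname{RTr}$ is surjective onto $Z(D)$, so symmetry of $f$ amounts to $\lambda(s)\mu(t)=\lambda(t)\mu(s)$ for all $s,t\in Z(D)$. This forces $\lambda$ and $\mu$ to be proportional, which is a contradiction. Hence $Z(D)=F$, and $\operatorname{RTr}:D\to F$.

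\emph{Step 2: $D=F$.} Now take $f(x,y)=\operatorname{RTr}(x)\operatorname{RTr}(y)$ and suppose $f(x,y)=\Phi(xy+yx)$. Let $n=\operatorname{RTr}(1)\in F$, which is the image of $\sqrt{\dim_F D}$.
\begin{itemize}
\item Putting $x=1$ gives $2\Phi(y)=n\operatorname{RTr}(y)$, so $\Phi=\tfrac n2\operatorname{RTr}$. Substituting back yields $n\operatorname{RTr}(xy)=\operatorname{RTr}(x)\operatorname{RTr}(y)$ for all $x,y\in D$.
\item If $n=0$ in $F$, then $\operatorname{RTr}(x)\operatorname{RTr}(y)=0$ for all $x,y$, which contradicts surjectivity (Proposition~\ref{tr1}).
\item If $n\neq 0$, take any $x$ with $\operatorname{RTr}(x)=0$. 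Then $\operatorname{RTr}(xy)=0$ for all $y$, so $x=0$ by the nondegeneracy in Proposition~\ref{tr2}. Thus $\operatorname{RTr}:D\to F$ is injective, so $\dim_F D=1$ and $D=F$.
\end{itemize}

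The main obstacle is finding the right test map $f$. The natural candidates $\operatorname{RTr}(xy)$ or $xy$ are already of the form $\Phi(x\circ y)$, so they give nothing. The decisive idea is that anticommuting invertible elements are traceless, and this works only because we are in a division algebra. Once that is in hand, the rest is bookkeeping, including the case where the characteristic divides $\sqrt{\dim D}$, which the argument handles separately.
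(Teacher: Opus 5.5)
Your proof is correct. It shares the paper's core ingredients: anticommuting invertible elements are traceless, the test map is $\operatorname{RTr}(x)\operatorname{RTr}(y)$, and this yields the identity $\operatorname{RTr}(1)\operatorname{RTr}(xy)=\operatorname{RTr}(x)\operatorname{RTr}(y)$. The overall structure of the argument is different, however.

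The paper splits into two cases.
\begin{itemize}
\item For commutative $D$, it observes that commutative zJpd algebras are zpd and cites Bre\v{s}ar's theorem that finite dimensional zpd algebras are generated by idempotents.
\item For noncommutative $D$, it applies the vector-valued form of zJpd (Proposition~\ref{zJpd-def}) to the $Z(D)$-valued map and derives $\operatorname{RTr}(1)\neq 0$. It then needs the existence of a nonzero anticommuting pair (Corollary 1.8 of Bre\v{s}ar's monograph) to get a nonzero traceless $u$, and concludes $\operatorname{RTr}(1)^2=\operatorname{RTr}(u)\operatorname{RTr}(u^{-1})=0$, a contradiction.
\end{itemize}

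You stay with scalar-valued maps throughout.
\begin{itemize}
\item Your two-functional symmetry trick forces $Z(D)=F$ uniformly. This covers commutative $D$, where $\operatorname{RTr}$ is the identity, without Bre\v{s}ar's theorem.
\item Once $\operatorname{RTr}(1)\neq 0$, you use nondegeneracy (Proposition~\ref{tr2}) to show every traceless element is zero. This gives $\dim_F D=1$ directly, so you never need anticommuting pairs to exist.
\end{itemize}

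Your route is more self-contained: it uses only the reduced-trace facts and the basic scalar definition of zJpd. The paper's route is shorter given the cited results, and it avoids the preliminary reduction of the center by working with $Z(D)$-valued maps.
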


\begin{proof}
We first remark that when the base field $F$ is of characteristic not 2,
a commutative algebra is zpd if and only if it is zJpd. If
$D$ is commutative, then $D$ is zpd and hence it is generated by idempotents
by \cite[Theorem 3.7]{Bresar}. Therefore $D=F$, since it has to be generated
by the only nonzero idempotent $1$.

Now assume that $D$ is noncommutative. We claim that
\begin{equation}\label{eq3}
\operatorname{RTr}(u) = 0, \quad \text{if } u \text{ is a Jordan zero divisor in } D.
\end{equation}
In fact, if $u\in D$ is a Jordan zero divisor, then there exists a nonzero element
$v\in D$ such that $u\circ v=0$. We can get that $u = -v^{-1}uv$ and
$$
\operatorname{RTr}(u) = \operatorname{RTr}(-v^{-1}uv) = -\operatorname{RTr}(v^{-1}uv) = -\operatorname{RTr}(u).
$$
As $\ch (F)\neq 2$, the claim follows.

Let us define an $F$-bilinear map  $f: D \times D \to Z(D)$ by 
\[
f(x,y) = \operatorname{RTr}(x)\operatorname{RTr}(y),
\]
for all $x,y\in D$. It follows from the claim (\ref{eq3}) that
$f(u,v)=\operatorname{RTr}(u)\operatorname{RTr}(v)=0$ whenever $u\circ v =0$.
Since $D$ is zJpd, by Proposition \ref{zJpd-def},
there exists an $F$-linear map $\Phi: D\to Z(D)$ such that $f(x,y)=\Phi(x\circ y)$
for all $x,y\in D$. Consequently, 
$$f(x,y)=\Phi(x\circ y)=\frac{1}{2}\Phi((x\circ y)\circ 1)=\frac{1}{2}f(x\circ y,1).$$
%This implies that $f(x,y)=\frac{1}{2}f(x\circ y,1)$ for all $x,y\in D$.
In other words,
\begin{equation}\label{eq4}
\operatorname{RTr}(x)\operatorname{RTr}(y)
=\frac{1}{2}\operatorname{RTr}(x\circ y)\operatorname{RTr}(1), 
\quad \text{for all } x,y \in D.
\end{equation}

On one hand, notice that the $Z(D)$-bilinear (and hence $F$-bilinear) map 
$B(x,y)=\operatorname{RTr}(xy)$
is nondegenerate by Proposition \ref{tr2}. There exists an element $a\in D$ such that
$B(1,a)=\operatorname{RTr}(a)\neq0$. Moreover, the equation (\ref{eq4}) gives
$
\frac{1}{2} \operatorname{RTr}(a\circ a)\operatorname{RTr}(1)
=\operatorname{RTr}(a)\operatorname{RTr}(a) \neq 0,
$
and therefore
\begin{equation}\label{eq5}
\operatorname{RTr}(1) \neq 0.
\end{equation}

On the other hand, since $D$ is noncommutative, 
there exists nontrivial Jordan zero divisors in $D$ by \cite[Corollary 1.8]{BOOK}, i.e.,
there are nonzero elements $u,v\in D$ such that $u \circ v = 0$.
Combining the equations (\ref{eq3}) and (\ref{eq4}), we have
\[
0=\operatorname{RTr}(u)\operatorname{RTr}(u^{-1}) = \frac{1}{2} \operatorname{RTr}(u \circ u^{-1})\operatorname{RTr}(1) = \operatorname{RTr}(1)^2.
\]
Therefore $\operatorname{RTr}(1)=0$, which contradicts the result (\ref{eq5}).
This contradiction shows that $D$ must be commutative and we complete the proof of the lemma.
\end{proof}

As promised, we now turn to local algebras.

\begin{lemma}\label{lo}
If a finite dimensional local $F$-algebra $A$ is zJpd, then $A=F$.
\end{lemma}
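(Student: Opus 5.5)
Write $N=\rad(A)$ and $D=A/N$, a division algebra because $A$ is local. The plan has two steps: first push the zJpd property down to $D$, so that Lemma \ref{di} gives $D=F$; then show that $N=0$.

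\textbf{Step 1: $D=F$.} Let $g:D\times D\to V$ be bilinear with $g(\bar a,\bar b)=0$ whenever $\bar a\circ\bar b=0$. Define $f(x,y)=g(\bar x,\bar y)$ on $A$. If $a\circ b=0$ in $A$, then $\bar a\circ\bar b=0$, so $f(a,b)=0$. By Proposition \ref{zJpd-def} there is a linear $\Phi:A\to V$ with
\[
g(\bar x,\bar y)=\Phi(x\circ y).
\]
We need $\Phi$ to vanish on $N$. Since $\ch(F)\neq 2$, every $n\in N$ satisfies $n=\tfrac12(n\circ 1)$, so
\[
\Phi(n)=\tfrac12\Phi(n\circ 1)=\tfrac12 g(\bar n,\bar 1)=0.
\]
Hence $\Phi$ factors through $D$, so $D$ is zJpd, and Lemma \ref{di} gives $D=F$. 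Thus $A=F1\oplus N$ with $N$ nilpotent.

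\textbf{Step 2: $N=0$.} Suppose $N\neq 0$. Let $k\ge 2$ be minimal with $N^k=0$. Choose a linear functional $\lambda$ on $A$ with $\lambda(1)=0$, $\lambda(N^2)=0$, but $\lambda\neq 0$ on $N$; this exists because $N^2\subsetneq N$ when $N\neq0$ is nilpotent. Write $x=\alpha(x)1+x_N$ and set
\[
f(x,y)=\lambda(x_N)\lambda(y_N).
\]
Check the hypothesis. Suppose $a\circ b=0$ with $a=\alpha 1+a_N$ and $b=\beta 1+b_N$. The scalar part gives $2\alpha\beta=0$.
\begin{itemize}
\item If both $\alpha,\beta\neq0$, this is impossible.
\item If $\alpha\neq 0$ and $\beta=0$, then $a$ is invertible and $2\alpha b_N+a_N\circ b_N=0$. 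Modulo $N^2$ this gives $\alpha b_N\in N^2$, so $\lambda(b_N)=0$ and $f(a,b)=0$. The case $\alpha=0$, $\beta\neq0$ is symmetric.
\item If $\alpha=\beta=0$, then $f(a,b)=\lambda(a)\lambda(b)$ need not vanish. This is where I expect the main obstacle.
\end{itemize}

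To handle the last case I would change the choice of $f$ so that it is forced to vanish on zero products. Two candidates are
\[
f(x,y)=\alpha(x)\lambda(y_N)+\lambda(x_N)\alpha(y),
\]
or an asymmetric variant such as $\alpha(x)\lambda(y_N)$. For the symmetric candidate the hypothesis holds: in the case $\alpha=\beta=0$ both terms vanish, and in the mixed case the computation above gives $\lambda(b_N)=0$.

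The conclusion should then fail. If $f(x,y)=\Phi(x\circ y)$, put $x=y=1$ to get $\Phi(1)=0$. Put $x=1$, $y=n$ to get $\lambda(n)=\Phi(2n)$ for $n\in N$. Then for $x=n,\ y=m$ with $n,m\in N$ we get $0=f(n,m)=\Phi(n\circ m)=\tfrac12\lambda(n\circ m)$. This is automatically zero since $n\circ m\in N^2$, so the symmetric choice gives no contradiction. It is only genuinely useful through the asymmetric variant, which exploits noncommutativity. If $N$ is commutative, $A$ is commutative, hence zpd, hence generated by idempotents by \cite[Theorem 3.7]{Bresar}, so $A=F$.

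In the general case I would use the asymmetric bilinear map
\[
f(x,y)=\alpha(x)\lambda(y_N)-\lambda(x_N)\alpha(y).
\]
It vanishes on Jordan zero pairs by the same case analysis as above. Any representation $f(x,y)=\Phi(x\circ y)$ is symmetric in $x,y$, so $f(1,n)=-f(n,1)$ forces $\lambda(n)=0$ for all $n\in N$. This contradicts the choice of $\lambda$. Hence $N=0$ and $A=F$.
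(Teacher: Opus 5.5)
Your proof is correct and is essentially the paper's. You pass to $A/\rad(A)$ to get $A=F1\oplus N$; there you prove the quotient inheritance directly via $n=\tfrac12(n\circ 1)$, whereas the paper cites \cite[Corollary 1.15]{BOOK}. Your final map $\alpha(x)\lambda(y_N)-\lambda(x_N)\alpha(y)$ is an antisymmetrized, scalar-valued version of the paper's map $f(x,y)=\alpha(x)\,y_N+\rad(A)^2$ into $\rad(A)/\rad(A)^2$, and the contradiction comes the same way, from comparing $f(1,n)$ with $f(n,1)$. The exploratory candidates and the separate commutative case can be dropped, since the final map works in general.
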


\begin{proof}
Since the base field $F$ is of characteristic not 2, then $(A,\circ)$ is
a unital zpd Jordan algebra, then any homomorphic image of $A$
is also zJpd by \cite[Corollary 1.15]{BOOK}, as every homomorphism is a Jordan homomorphism.
We get that $A/\rad(A)$ is a finite dimensional zJpd division algebra
and hence $A/\rad(A)\cong F$ by Lemma \ref{di}.
Furthermore $A=F\oplus \rad(A)$ as the direct sum of $F$-vector space.

We assume in reverse that $\rad(A)\neq0$. 
Since $A$ is finite dimensional, then $\rad(A)$ is a nilpotent ideal and hence
the ideal $\rad(A)^2$ is a proper subset of $\rad(A)$.
Let us define an $F$-bilinear map
$f: A\times A\to \rad(A)/\rad(A)^2$
by
\[
f(\lambda+a,\mu+b)=\lambda b+\rad(A)^2
\]
for all $\lambda,\mu\in F$ and $a,b\in\rad(A)$. Taking arbitrarily a pair of
Jordan zero divisors with $u\circ v=0$, we write $u=\lambda+a$ and $v=\mu+b$
with $\lambda,\mu\in F$ and $a,b\in\rad(A)$. It is clear that 
$2\lambda\mu+2\lambda b+2\mu a+a\circ b=0$ and furthermore $\lambda\mu=0$,
since $\ch(F)\neq 2$.
If $\lambda=0$, obviously $f(\lambda+a,\mu+b)=0$.
If $\mu=0$, then $\lambda b=-\frac12(a\circ b)\in \rad(A)^2$, and we can also get
$f(\lambda+a,\mu+b)=0$.
Hence the bilinear map $f$ satisfies
$f(u,v)=0$ whenever $u\circ v=0$.
Since $A$ is zJpd, it follows from Proposition \ref{zJpd-def} that
there exists an $F$-linear map $\Phi: A\to \rad(A)/\rad(A)^2$
such that $f(x,y)=\Phi(x\circ y)$ for all $x,y\in A$.
Choose $a\in \rad(A)\setminus \rad(A)^2$ and we have
\[
f(1,a)=\Phi(1\circ a)=\Phi(a\circ1)=f(a,1).
\]
However,
\[
f(1,a)
=
a+\rad(A)^2
\neq0,
\]
while $f(a,1)=0$.
This contradiction shows that $\rad(A)=0$. Hence $A=F$.
\end{proof}

The remaining tool we need is the theory of lifting idempotents modulo ideals, see \cite{T,N}.
The classical version of this theory concerns lifting idempotents modulo nilpotent ideals.
However, in the case of finite dimensional algebras, lifting idempotents
can be made modulo any ideals due to the work of Nicholson \cite{N}.
We refer the reader to \cite[Lemma 2.25]{BOOK} for a self-contained proof
of this fact.
 
\begin{lemma}\label{l1}
Let $A$ be a finite dimensional unital algebra and $I$ be an ideal of $A$. 
Then every idempotent in $A/I$ is of the form $e + I$ with $e\in A$ an idempotent.
\end{lemma}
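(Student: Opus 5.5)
We need to prove Lemma \ref{l1}: for a finite dimensional unital algebra $A$ and an ideal $I$ of $A$, every idempotent of $A/I$ lifts to an idempotent of $A$. The plan is to use the Fitting decomposition of the left multiplication by a preimage, which turns the lifting problem into a statement about polynomials.

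\textbf{Setup.} Pick any $a\in A$ with $a+I$ idempotent, so that $a^2-a\in I$. Since $A$ is finite dimensional, $a$ is algebraic over $F$. Let $m(t)\in F[t]$ be its minimal polynomial, and factor it as
\[
m(t)=t^{r}(t-1)^{s}g(t), \qquad g(0)\neq 0,\quad g(1)\neq 0 .
\]
The subalgebra $F[a]\cong F[t]/(m)$ is commutative and finite dimensional. By the Chinese remainder theorem it splits as a product of the local pieces $F[t]/(t^r)$, $F[t]/((t-1)^s)$ and $F[t]/(g)$.

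\textbf{Constructing the idempotent.} Let $e\in F[a]$ be the idempotent that is $1$ on the $(t-1)^s$ component and $0$ on the other two components. Concretely, $e=p(a)$ with
\[
p\equiv 1 \pmod{(t-1)^s}, \qquad p\equiv 0 \pmod{t^r g}.
\]
This $e$ is a genuine idempotent of $A$ and is a polynomial in $a$. It remains to show $e-a\in I$.

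\textbf{The key step: $e-a\in I$.} Work in the quotient $\bar A=A/I$, where $\bar a$ is idempotent. Then $\bar a$ is annihilated by $t^2-t$, so the minimal polynomial of $\bar a$ divides $t(t-1)$. Compare two ideals of $F[t]$:
\begin{itemize}
\item[(i)] the kernel of $F[t]\to \bar A$, $t\mapsto \bar a$, which contains both $t^2-t$ and $m$;
\item[(ii)] hence this kernel contains $\gcd(t^2-t,\,m)$.
\end{itemize}
We need to show that $p(t)-t$ lies in this kernel. It suffices to check that $p-t$ is divisible by $\gcd(t^2-t,m)$, that is, that $p-t$ vanishes at $0$ if $r\geq1$ and at $1$ if $s\geq1$.

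At $t=0$ (when $r\geq 1$) we have $p(0)=0$, since $t^r$ divides $p$; hence $p(0)-0=0$. At $t=1$ (when $s\geq 1$) we have $p(1)=1$, since $p\equiv 1 \pmod{t-1}$; hence $p(1)-1=0$. The factor $g$ needs no check, because $g$ shares no root with $t^2-t$, so it does not enter the gcd.

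Therefore $(t^2-t,m)$ divides $p-t$, so $p-t$ lies in the kernel, and $\bar e=\bar a$. This gives $e+I=a+I$ with $e$ idempotent.

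\textbf{Main obstacle.} The only delicate point is the bookkeeping of the gcd, namely showing that $p-t$ lies in the ideal generated by $t^2-t$ and $m$. Since $F[t]$ is a principal ideal domain, this ideal is generated by $\gcd(t^2-t,m)$, which reduces the question to the root check above. No nilpotence of $I$ is needed, because finite dimensionality enters only through the existence of $m$.
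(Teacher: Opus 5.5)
Your proof is correct, and it takes a different route from the paper, which does not prove this lemma at all. The paper defers to Nicholson's work on lifting idempotents: in exchange rings, a class that includes all finite dimensional algebras, idempotents lift modulo arbitrary ideals. It also points to Lemma 2.25 of Bre\v{s}ar's monograph for a self-contained proof.

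You give a direct, elementary argument instead. You take the idempotent $e=p(a)$ of $F[a]\cong F[t]/(m)$ that corresponds, via the Chinese remainder theorem, to the factor $(t-1)^s$. You then observe that the ideal $\{q\in F[t]: q(a)\in I\}$ of the principal ideal domain $F[t]$ contains $t^2-t$ and $m$. It therefore contains their gcd, which is $t^{\min(r,1)}(t-1)^{\min(s,1)}$, and your root check shows this gcd divides $p-t$.

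Your route buys several things:
\begin{itemize}
\item It is fully self-contained and needs no characteristic assumption and no nilpotence of $I$.
\item The lift lies in $F[a]$, so it commutes with the chosen preimage $a$.
\item Finite dimensionality is used only to make $a$ algebraic, so the argument works verbatim in any algebraic algebra.
\item The set $\{q\in F[t]: q(a)\in I\}$ is still an ideal of $F[t]$ when $I$ is only a left or right ideal (it is closed under multiplication by $t$ because $a\,q(a)$ and $q(a)\,a$ stay in $I$). So the same argument gives the one-sided lifting statement as well.
\end{itemize}

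The cited approach is more general on the ring-theoretic side: it needs no base field and covers arbitrary exchange rings.

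One expository point: your opening sentence announces a Fitting decomposition of the left multiplication operator, but the proof never uses it. What you actually use is the primary decomposition of $F[a]$ coming from the minimal polynomial of $a$, and the opening should say that.
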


We are now in a position to prove the main theorem.

\begin{proof}[Proof of Theorem \ref{thm}]
We only need to prove the necessity.
Let $A$ be a finite dimensional zJpd $F$-algebra. From now on, We denote $I$ the ideal of $A$
generated by all commutators of the form $[e,x]$, where $e$ is an idempotent and $x$
is an arbitrary element of $A$. Let $E$ be the subalgebra of $A$ generated by all
idempotents. We want to show $A=E$.
 
The proof is divided into three steps.

\medskip
\noindent
{\bf Step 1.} 
If we further assume that every idempotent in $A$ is central, then $A\cong F\oplus \cdots 
\oplus F$. 

Let $1= e_1 + \cdots + e_r$ be the decomposition of unity into the sum of primitive idempotents. Since every idempotent is central, this decomposition is also
corresponding to the block decomposition of $A$. In other words, 
$A\cong e_1 Ae_1\oplus \cdots\oplus e_r Ae_r$ and each block
$e_i Ae_i\cong \End_A(Ae_i)$ is local. Notice that $e_i Ae_i$ is
a zJpd $F$-algebra for $1\leq i\leq r$ by \cite[Theorem 1.16]{BOOK}.
Hence the result follows from Lemma \ref{lo}.

\medskip
\noindent
{\bf Step 2.}
We show that $A=E+\rad(A)$.

Similar to the first paragraph of the proof of Lemma \ref{lo}, we get that
$A/\rad(A)$ is a finite dimensional zJpd semisimple algebra.
The well-known Wedderburn-Artin theorem implies that
\[
A/\rad(A)\cong \bigoplus_{i=1}^{r}M_{n_i}(D_i),
\]
where $n_i\in \mathbb{N}$ and each $D_i$ is a division $F$-algebra.
Notice that each algebra $M_{n_i}(D_i)$ is zJpd by \cite[Theorem 1.16]{BOOK} again.
Hence it follows from Lemma \ref{di} that
\[
A/\rad(A)\cong
M_{n_1}(D_1)\oplus\cdots\oplus
M_{n_k}(D_k)\oplus F\oplus\cdots\oplus F,
\]
where $n_i\ge 2$ for each $1\leq i\leq k$.
It is well-known that the algebra $M_{n}(D)$ is generated by idempotents if $n\ge 2$,
see \cite[Corollary 2.4]{BOOK} for example.
Therefore, every element $x+\rad(A)\in A/\rad(A)$ is a linear combination of products of
idempotents in $A/\rad(A)$, Using Lemma \ref{l1} we can find an element $x_0\in E$
such that $x+\rad(A)=x_0+\rad(A)$,
which in turn means $A=E+\rad(A)$.

\medskip
\noindent
{\bf Step 3.}
We finally show that $\rad(A)\subseteq E$ and hence $A=E$ as desired.

Recall that $I$ is the ideal generated by all commutators of idempotents with
arbitrary elements in $A$.
By Lemma \ref{l1} again,
every idempotent of $A/I$ is of the form $e+I$ with $e\in A$ an idempotent.
Since $[e, x]\in I$ for all $x\in A$, it follows that every idempotent in
$A/I$ is central. On the other hand, the quotient algebra $A/I$ is also
zJpd by \cite[Corollary 1.15]{BOOK}. We have
\[
A/I\cong F\oplus \cdots \oplus F
\]
by Step 1. In particular $A/I$ is semisimple.
It follows that $\rad(A)\subseteq I$. Notice that $I\subseteq E$, see
\cite[Lemma 2.26]{BOOK} for example. We have $\rad(A)\subseteq E$
and complete the proof of the main result.
\end{proof}

Combining Theorem \ref{thm} and Bre\v{s}ar's Theorem 3.7 in \cite{Bresar}
(see also \cite[Theorem 2.27]{BOOK}), we obtain that for finite dimensional
$F$-algebras, the notion of a zpd algebra coincides with that of a zJpd algebra,
provided that the base field $F$ is of characteristic not 2, which is a quite common
restriction in the study of Jordan and Lie algebras.
As far as we know, researchers have not found an example of zpd or zJpd algebras
which is not generated by idempotents. We tend to have a such example,
but we still have 

\begin{conjecture} \label{open}
Let $A$ be an $F$-algebra (maybe infinite dimensional).
Then $A$ is zJpd if and only if it is zpd.
\end{conjecture}

Let us end this note with an open problem of zero Lie product determined algebras.

\begin{remark}
For an associative $F$-algebra $A$, denoting the Lie
product on $A$ as usual by $[x, y]=xy-yx$, then $(A,[\cdot,\cdot])$
becomes a Lie algebra. Recall that $A$ is said to be \emph{zero Lie product determined}
(\emph{zLpd} for short) if the Lie algebra $(A,[\cdot,\cdot])$ is zero product determined.

Unlike the zpd and zJpd algebras, 
the zLpd algebras exhibit a substantially different behavior, see \cite[Section 3.2]{BOOK}.
Even every finite dimensional complex simple Lie algebra is zpd \cite{WangYC},
we have not found a representation theoretic description of this fact, see \cite{BGLLZ}.
However, in our opinion, for a zLpd algebra $A$,
there should exist and hence we ask for a structural property about the generators of $(A,[\cdot,\cdot])$
modulo its Lie ideal $Z(A)$,
%analogous to Theorem \ref{thm}, 
which analogously can be described by the generators (as an associative algebra) 
of the universal envelop algebra of $(A,[\cdot,\cdot])$.

\end{remark}

\smallskip
\noindent{\bf Data Availability Statement.} All data generated or analyzed during this study are included in this published article. No additional data are available.

\smallskip
\noindent{\bf Conflict of interest.} On behalf of all authors, the corresponding author states that there is no conflict of
interest.

%\noindent{\bf Acknowledgements}.
%The authors would like to thank the referees for their valuable comments and suggestions
%which significantly helped us improve the final presentation of this paper.

%{\bf Acknowledgements.}

\end{document}